\documentclass{amsart}

\usepackage{pifont}
\usepackage{amsfonts}

\usepackage{amscd,amssymb,amsmath,graphicx,verbatim,mathrsfs}
\usepackage[TS1,OT1,T1]{fontenc}
\usepackage{extarrows}
\usepackage[all]{xy}
\usepackage{dcpic,mathtools,pictexwd}
\usepackage{rotating}
\theoremstyle{remark}
\usepackage{indentfirst}
\usepackage{amsthm} 
\usepackage{tikz}
\usetikzlibrary{angles,quotes}
\usetikzlibrary{calc}

\usepackage{color}
\usepackage[colorlinks=true, allcolors=blue]{hyperref}
\usepackage{cleveref}
\usepackage{soul}
\usepackage{enumitem}
\usepackage{tikz-cd}

\allowdisplaybreaks

\theoremstyle{plain}

\newtheorem{thm}{Theorem}[section]
\newtheorem{lem}[thm]{Lemma}

\theoremstyle{definition}

\newtheorem{defn}[thm]{Definition}

\newtheorem{rmk}[thm]{Remark}

\theoremstyle{plain}
\newcounter{thmintroctr}

\theoremstyle{definition}
\newcounter{goalintroctr}

\numberwithin{equation}{section}

\begin{document}

	\title[Reflexive and strictly convex Banach spaces without Property $(H)$ ]{R\lowercase{eflexive and strictly convex}  B\lowercase{anach spaces without} P\lowercase{roperty} $(H)$}

	\author{Chunliu Feng}
	\address{Department of Mathematics, Hebei Normal University, Hebei, China}
	\email{fengfcl@126.com}
	\thanks{ }

	\author{Geng Tian}
	\address{Department of Mathematics, Liaoning University, Liaoning, China}
	\email{gengtian.ncg@gmail.com}
	
	\thanks{The second author was supported in part by Grant for Excellent Young Scholars in Tianyuan Mathematics.}

\keywords{Banach space, Property $(H)$}

\begin{abstract}
We prove that Property~$(H)$ fails for a broad class of
reflexive strictly convex  Banach spaces.
More generally, the real Banach space
$\bigl(\bigoplus_{j=1}^{\infty}
L^{p_j}(\Omega_j,\mu_j)\bigr)_{\ell^s}$
does not have Property~$(H)$ whenever $1\leq s<\infty$,
$1\leq p_j<\infty$, $\sup_j p_j=+\infty$,
and each $L^{p_j}(\Omega_j,\mu_j)$ is infinite-dimensional.
\end{abstract}

\maketitle

\section{Introduction}
	
Recently, L.~Cheng, Q.~Cheng, and Y.~Wang
\cite[Theorem~1.1]{ChengChengWang2026} proved that $c_0$ does not
have Property~$(H)$, answering a question raised by Kasparov and Yu.
Here $c_0$ denotes the real Banach space of sequences converging
to zero, equipped with the supremum norm.
Property~$(H)$ was introduced by Kasparov and Yu in their work
on the Novikov conjecture \cite{KasparovYu2012}.
They proved that a countable discrete group admitting a coarse
embedding into a Banach space with Property~$(H)$ satisfies the
strong Novikov conjecture.
This connection motivates the study of the geometric conditions
under which a Banach space admits Property~$(H)$.

In this paper, we prove that Property~$(H)$ fails for a broad
class of Banach spaces, including reflexive strictly convex
Banach spaces.
More precisely, let $1\leq s<\infty$, and let
$\{p_j\}_{j\geq1}\subset[1,\infty)$ satisfy
$\sup_j p_j=\infty$.
For each $j\geq1$, let $(\Omega_j,\Sigma_j,\mu_j)$ be a
measure space such that $L^{p_j}(\Omega_j,\mu_j)$ is
infinite-dimensional.
Our main result, Theorem~\ref{prop:lp-direct-sum-no-property-H},
shows that the real Banach space
\[
\mathscr X:=
\left(
\bigoplus_{j=1}^{\infty}L^{p_j}(\Omega_j,\mu_j)
\right)_{\ell^s}
\]
does not have Property~$(H)$.
Here the $\ell^s$-direct sum consists of all sequences
$x=(x^{(j)})_{j\geq1}$ with
$x^{(j)}\in L^{p_j}(\Omega_j,\mu_j)$ and
$\sum_{j=1}^{\infty}
\|x^{(j)}\|_{L^{p_j}(\Omega_j,\mu_j)}^s<\infty$,
equipped with the norm
\[
\|x\|_{\mathscr X}
=
\left(
\sum_{j=1}^{\infty}
\|x^{(j)}\|_{L^{p_j}(\Omega_j,\mu_j)}^s
\right)^{1/s}.
\]

If, in addition, $1<s<\infty$ and $1<p_j<\infty$ for every $j$,
then $\mathscr X$ is reflexive and strictly convex with its
$\ell^s$-direct sum norm.
Reflexivity follows from the corresponding property of the
summands and the standard theorem on $\ell^s$-sums of reflexive
spaces; see \cite[Lemma~2.6.2]{Fackler2011}.
Strict convexity follows because each summand is strictly convex
and an $\ell^s$-sum of strictly convex spaces is strictly convex
when $1<s<\infty$.
In particular,
\[
\left(
\bigoplus_{j=1}^{\infty}\ell^{2j}(\mathbb{N})
\right)_{\ell^2}
\]
is a reflexive strictly convex Banach space without
Property~$(H)$.
Consequently, reflexivity and strict convexity, even together,
do not imply Property~$(H)$.
Moreover, each  $\ell^{2j}(\mathbb{N})$  has Property~$(H)$ by the
classical Mazur map construction; see \cite{KasparovYu2012}.
Thus, for every $1\leq s<\infty$, Property~$(H)$ is not
preserved under countable $\ell^s$-direct sums.

Our proof is short and elementary, taking the existence of
regular expander families as a standard input.
It combines linear embeddings of finite-dimensional
$\ell^\infty$-spaces with uniformly bounded distortion
and a topological centering argument.
The Hilbert-valued Poincar\'e inequality then yields a
contradiction to the uniform continuity required by
Property~$(H)$.

We note that the same argument shows that the Schatten-class direct sum
\[
\left(
\bigoplus_{j=1}^{\infty}S^{p_j}(\mathscr H_j)
\right)_{\ell^s},
\]
regarded as a real Banach space, also fails to have
Property~$(H)$ under the same assumptions on $s$ and
$\{p_j\}$, where each $\mathscr H_j$ is an infinite-dimensional
Hilbert space and $S^{p_j}(\mathscr H_j)$ denotes the
corresponding Schatten class.

We now recall the definition of Property~$(H)$.
\begin{defn}
	Let $\mathscr X$ be a real Banach space, and let $S(\mathscr X)$
	denote its unit sphere.
	We say that $\mathscr X$ has \emph{Property $(H)$} if there exist
	two increasing sequences $\{\mathscr V_n\}_{n\geq1}$ and
	$\{\mathscr{W}_n\}_{n\geq1}$ 
	of finite-dimensional subspaces of
	$\mathscr X$ and a real Hilbert space $\mathscr{H}$, respectively, such that
	$\bigcup_{n\geq1}\mathscr V_n$
	is dense in $\mathscr X$, and there exists a uniformly
	continuous map
	\[
	\phi:S(\mathscr X)\longrightarrow S(\mathscr{H})
	\]
	whose restriction
	\[
	\phi_n:=\phi\big|_{S(\mathscr V_n)}:
	S(\mathscr V_n)\longrightarrow S(\mathscr{W}_n)
	\]
	is a homeomorphism for every $n\geq1$.
\end{defn}

\medskip
\noindent\textbf{Acknowledgements.}
The authors would like to thank Professor Guoliang Yu
for his encouragement.

\section{Preliminary} 

We recall some background on regular graphs and the
Hilbert-valued Poincar\'e inequality.
The following  material is standard; we refer
the reader to
\cite{HooryLinialWigderson2006,Naor2021,MarcusSpielmanSrivastava2015}
for further details.

A finite simple undirected graph is a pair
$\mathcal G=(\mathcal V,\mathcal E)$, where $\mathcal V$
is a finite nonempty set of vertices and each element of
$\mathcal E$ is an unordered pair $\{v,w\}$ of distinct
vertices, representing a single edge.
There are no loops, and any two distinct vertices are
joined by at most one edge.
Two vertices $v,w\in\mathcal V$ are called adjacent if
$\{v,w\}\in\mathcal E$.

The graph is connected if, for any two distinct vertices
$v,w\in\mathcal V$, there exists a finite sequence of vertices
\[
v=v_0,v_1,\ldots,v_m=w
\]
such that
\[
\{v_{i-1},v_i\}\in\mathcal E,
\qquad i=1,\ldots,m.
\]
For an integer $d\geq1$,  the graph is called $d$-regular
if every vertex is adjacent to exactly $d$ distinct vertices.

Let $\mathcal G=(\mathcal V,\mathcal E)$ be a finite connected
simple undirected $d$-regular graph with $d\geq1$,
and write $k:=|\mathcal V|$.
After choosing an ordering of $\mathcal V$, its adjacency
matrix $A=(A_{vw})_{v,w\in\mathcal V}$ is defined by
\[
A_{vw}:=
\begin{cases}
	1, & \{v,w\}\in\mathcal E,\\
	0, & \text{otherwise}.
\end{cases}
\]
Its normalized adjacency matrix is
\[
P:=\frac1d A.
\]
Equivalently, $P_{vw}$ is the probability of moving from
$v$ to $w$ when one of the $d$ neighbors of $v$ is chosen
uniformly.
Since the graph is undirected and $d$-regular, we have
\[
P_{vw}=P_{wv}\geq0,
\qquad
\sum_{w\in\mathcal V}P_{vw}=1,
\qquad
\sum_{v\in\mathcal V}P_{vw}=1.
\]
Thus $P$ is symmetric and doubly stochastic.
In particular, $P$ is reversible with respect to the
uniform probability measure $\pi(v):=1/k$ on $\mathcal V$,
meaning that
\[
\pi(v)P_{vw}=\pi(w)P_{wv},
\qquad v,w\in\mathcal V.
\]
It is well known that the eigenvalues of $P$, listed with
multiplicity in nonincreasing order, satisfy
\[
1=\lambda_1(P)>\lambda_2(P)\geq\cdots
\geq\lambda_k(P)\geq-1.
\]
The strict inequality $\lambda_1(P)>\lambda_2(P)$ follows
from the connectedness of $\mathcal G$.
The number
\[
1-\lambda_2(P)
\]
is called the spectral gap of $P$.
Here the eigenvalues are ordered by their numerical values,
rather than their absolute values.

For a fixed integer $d\geq3$, a sequence
$\{\mathcal G_q\}_{q\in\mathbb N}$ of finite connected
$d$-regular graphs is called a family of expanders if,
writing $\mathcal V_q$ for the vertex set,
$k_q:=|\mathcal V_q|$, and $P_q$ for the normalized
adjacency matrix, we have
\[
k_q\longrightarrow\infty
\quad\text{as }q\to\infty,
\quad \text{and} \quad 
\inf_{q\in\mathbb N}
\bigl(1-\lambda_2(P_q)\bigr)>0.
\]
Thus the vertex degrees are fixed, while the spectral gaps
remain bounded below by a positive constant as the numbers
of vertices tend to infinity.

A graph is bipartite if its vertex set can be partitioned
into two disjoint sets such that every edge joins a vertex
in one set to a vertex in the other.
For a connected $d$-regular graph, the adjacency eigenvalue
$d$ is called trivial.
If the graph is bipartite, $-d$ is also called trivial.
All other adjacency eigenvalues are called nontrivial.
The graph is called Ramanujan if every nontrivial
adjacency eigenvalue $\lambda$ satisfies
\[
|\lambda|\leq2\sqrt{d-1}.
\]
By \cite[Theorem 5.5]{MarcusSpielmanSrivastava2015},
for every integer $d\geq3$ there exists a sequence of finite
connected $d$-regular bipartite Ramanujan graphs
$\{\mathcal G_q\}_{q\in\mathbb N}$ with $k_q\to\infty$.
For the remainder of the discussion, fix $d\geq3$
and choose such a sequence.
Writing $A_q$ for their adjacency matrices, we have
\[
\lambda_2(P_q)
=\frac{\lambda_2(A_q)}d
\leq\frac{2\sqrt{d-1}}d.
\]
Consequently, this sequence is a family of expanders with
the uniform spectral-gap bound
\[
1-\lambda_2(P_q)\geq
\delta:=1-\frac{2\sqrt{d-1}}d>0,
\qquad q\in\mathbb N.
\]
The positivity follows from
$d^2-4(d-1)=(d-2)^2>0$.
Although bipartiteness gives the eigenvalue $-1$ for $P_q$,
it does not affect this lower bound for
$1-\lambda_2(P_q)$.

The spectral gap controls the variation of arbitrary
Hilbert-space-valued functions on the vertices.
More precisely, fix $q\in\mathbb N$, let
$(\mathscr H,\|\cdot\|)$ be an arbitrary real Hilbert space,
and assign a vector $\xi_v\in\mathscr H$ to each
vertex $v\in\mathcal V_q$.
Define the mean of this family by
\[
\bar\xi:=\frac1{k_q}\sum_{v\in\mathcal V_q}\xi_v.
\]
Its variance with respect to the uniform probability
measure on $\mathcal V_q$ is
\[
\frac1{k_q}\sum_{v\in\mathcal V_q}
\|\xi_v-\bar\xi\|^2.
\]
The Hilbert-valued spectral-gap inequality gives
\[
\frac{1-\lambda_2(P_q)}{k_q^2}
\sum_{v,w\in\mathcal V_q}\|\xi_v-\xi_w\|^2
\leq
\frac1{k_q}\sum_{v,w\in\mathcal V_q}
(P_q)_{vw}\|\xi_v-\xi_w\|^2;
\]
see \cite[Section~1.1, formula~(4)]{Naor2021}.
Using the identity
\[
\frac1{k_q^2}
\sum_{v,w\in\mathcal V_q}\|\xi_v-\xi_w\|^2
=
\frac2{k_q}
\sum_{v\in\mathcal V_q}\|\xi_v-\bar\xi\|^2
\]
and the bound $1-\lambda_2(P_q)\geq\delta$, we obtain
the Poincar\'e inequality
\[
\frac{2\delta}{k_q}
\sum_{v\in\mathcal V_q}\|\xi_v-\bar\xi\|^2
\leq
\frac1{k_q}\sum_{v,w\in\mathcal V_q}
(P_q)_{vw}\|\xi_v-\xi_w\|^2.
\]
The sum on the right is taken over ordered pairs of
vertices.
Only adjacent pairs contribute, and their total
normalized weight is
\[
\frac1{k_q}\sum_{v,w\in\mathcal V_q}(P_q)_{vw}=1.
\]
Thus the right-hand side is the average squared difference
between the vectors assigned to the endpoints of a
uniformly chosen oriented edge.
The inequality bounds the variance of the entire family
by the average squared difference along edges, with
constant $1/(2\delta)$, independently of $q$.
For the Ramanujan graphs chosen above, this constant
depends only on $d$.

The preceding discussion yields the following lemma.

\begin{lem}\label{lem:hilbert-poincare}
	Let $d\geq3$ be an integer, and set
	$\delta:=1-2\sqrt{d-1}/d>0$.
	There exists a sequence $\{\mathcal G_q\}_{q\in\mathbb N}$
	of finite connected simple undirected $d$-regular graphs,
	with vertex sets $\mathcal V_q$ and normalized adjacency
	matrices $P_q$, such that
	$k_q:=|\mathcal V_q|\to\infty$ as $q\to\infty$
	and the following holds:
	for every $q\in\mathbb N$, every real Hilbert space
	$(\mathscr H,\|\cdot\|)$, and every family
	$\{\xi_v\}_{v\in\mathcal V_q}$  of vectors in $\mathscr H$,
	we have
	\begin{equation}\label{eq:poincare}
		\frac{2\delta}{k_q}
		\sum_{v\in\mathcal V_q}\|\xi_v-\bar\xi\|^2
		\leq
		\frac1{k_q}
		\sum_{v,w\in\mathcal V_q}
		(P_q)_{vw}\|\xi_v-\xi_w\|^2,
		\qquad
		\bar\xi:=\frac1{k_q}\sum_{v\in\mathcal V_q}\xi_v.
	\end{equation}
\end{lem}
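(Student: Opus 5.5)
The plan is to assemble the lemma from the three ingredients already recalled in this section: existence of bipartite Ramanujan graphs, the spectral bound on $\lambda_2(P_q)$, and the scalar-to-Hilbert spectral gap inequality. First I would invoke \cite[Theorem 5.5]{MarcusSpielmanSrivastava2015} to obtain, for the fixed $d\geq3$, a sequence $\{\mathcal G_q\}$ of finite connected $d$-regular bipartite Ramanujan graphs with $k_q\to\infty$. Since $\lambda_2(A_q)$ is a nontrivial eigenvalue (the trivial ones are $d$ and $-d$, and $d$ is simple by connectedness), Ramanujan gives $\lambda_2(P_q)\leq 2\sqrt{d-1}/d$. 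Hence $1-\lambda_2(P_q)\geq\delta$, and $\delta>0$ because $d^2-4(d-1)=(d-2)^2>0$.

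Next I would prove the spectral-gap inequality
\[
\frac{1-\lambda_2(P_q)}{k_q^2}\sum_{v,w}\|\xi_v-\xi_w\|^2\leq\frac1{k_q}\sum_{v,w}(P_q)_{vw}\|\xi_v-\xi_w\|^2
\]
directly rather than only cite it. Both sides depend on $\xi$ only through differences, so I may assume $\bar\xi=0$. Writing $\xi_v=\sum_i c_{v,i}e_i$ in an orthonormal basis of the (finite-dimensional) span of the $\xi_v$, both sides split as sums over $i$ of the corresponding scalar expressions. This reduces the claim to real functions $f$ on $\mathcal V_q$ with mean zero.

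For such $f$, we have $\frac1{k_q}\sum_{v,w}(P_q)_{vw}(f_v-f_w)^2=\frac2{k_q}\langle f,(I-P_q)f\rangle$, using that $P_q$ is symmetric and stochastic. Since $f\perp\mathbf 1$, the Rayleigh quotient gives $\langle f,(I-P_q)f\rangle\geq(1-\lambda_2(P_q))\|f\|^2$. Meanwhile $\frac1{k_q^2}\sum_{v,w}(f_v-f_w)^2=\frac2{k_q}\|f\|^2$ by expanding the square and using $\sum_v f_v=0$. The same expansion in $\mathscr H$ yields the identity $\frac1{k_q^2}\sum_{v,w}\|\xi_v-\xi_w\|^2=\frac2{k_q}\sum_v\|\xi_v-\bar\xi\|^2$, which with $1-\lambda_2(P_q)\geq\delta$ gives \eqref{eq:poincare}.

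I do not expect a genuine obstacle here. The only points needing care are the justification that $\lambda_2(A_q)$ is a nontrivial eigenvalue, so the Ramanujan bound applies to it, and the coordinatewise reduction from $\mathscr H$ to scalars, which works because each side is a sum of squared norms. Both are routine.
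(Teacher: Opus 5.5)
Your proposal is correct and follows the paper's route. The paper also takes the bipartite Ramanujan graphs from Marcus--Spielman--Srivastava to get $1-\lambda_2(P_q)\geq\delta$, then combines the Hilbert-valued spectral-gap inequality with the variance identity to obtain \eqref{eq:poincare}. The only difference is that you prove the spectral-gap inequality yourself, by reducing to mean-zero scalar coordinates and using the Rayleigh quotient on $\mathbf 1^{\perp}$, whereas the paper cites it from Naor.
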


\section{Main result}

For a real normed space $\mathscr{X}$, 
we write
\[
B_{\mathscr{X}}:=\{x\in\mathscr{X}:\|x\|\leq1\}.
\]
Before proving our main result, we need a lemma.

\begin{lem}\label{lem:centering}
	Let $\mathscr{X}$ be an $n$-dimensional real normed space,
	where $n\geq2$, and let
	\[
	\phi:S(\mathscr{X})\longrightarrow  S(\ell_n^2)
	\]
	be a continuous map that is not null-homotopic. Define
	\[
	\widetilde{\phi}:\mathscr{X}\longrightarrow B_{\ell_n^2},
	\qquad
	\widetilde{\phi}(x):=
	\begin{cases}
		\min\{\|x\|,1\}\,
		\phi \Big( \dfrac{x}{\|x\|} \Big), &x\neq0,\\
		0,&x=0.
	\end{cases}
	\]
	Then, for every integer $k\geq1$ and every
	$x_1,\ldots,x_k\in\mathscr{X}$, there exists
	$a\in\mathscr{X}$ such that
	\[
	\frac1k\sum_{i=1}^k\widetilde{\phi}(a+x_i)=0.
	\]
\end{lem}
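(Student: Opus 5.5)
We argue by contradiction, using a degree/homotopy argument on a large sphere in $\mathscr X$. Define
\[
F:\mathscr X\longrightarrow\mathbb R^n=\ell_n^2,\qquad F(a):=\frac1k\sum_{i=1}^k\widetilde\phi(a+x_i).
\]
The map $\widetilde\phi$ is continuous on $\mathscr X$. Away from $0$ this is clear. At $0$ it follows from $\|\widetilde\phi(x)\|=\min\{\|x\|,1\}\to0$. Hence $F$ is continuous, and we want to show that $F$ has a zero.

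Suppose $F$ never vanishes. Set $R:=1+\max_i\|x_i\|$, and consider $G:=F/\|F\|$ restricted to the sphere $S_{\mathscr X}(0,r)$ for $r\ge R$ large. Since $G$ extends continuously to the ball $\{\|a\|\le r\}$, which is contractible, the map $a\mapsto G(ra)$ from $S(\mathscr X)$ to $S(\ell^2_n)$ is null-homotopic.

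The key step is to show that for $r$ large this map is homotopic to $\phi$, which contradicts the hypothesis. For $\|a\|=r$ with $r\ge 2R$, each point $a+x_i$ has norm at least $r-\|x_i\|\ge1$. So $\widetilde\phi(a+x_i)=\phi\bigl((a+x_i)/\|a+x_i\|\bigr)$, a unit vector. Moreover,
\[
\Bigl\|\frac{a+x_i}{\|a+x_i\|}-\frac a{\|a\|}\Bigr\|\le\frac{2\|x_i\|}{\|a\|}\to0
\]
uniformly in $a$ as $r\to\infty$. Since $\phi$ is uniformly continuous on the compact set $S(\mathscr X)$, for $r$ large each $\widetilde\phi(a+x_i)$ lies within $1/2$ of $\phi(a/\|a\|)$. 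Therefore $F(a)$, an average of such vectors, lies within $1/2$ of the unit vector $\phi(a/r)$. In particular $F(a)\ne0$ on this sphere, and so does every point of the straight-line homotopy
\[
(1-t)F(a)+t\,\phi(a/r),\qquad t\in[0,1],
\]
since each such point is within $1/2$ of a unit vector. Normalizing this homotopy gives a homotopy in $S(\ell^2_n)$ between $u\mapsto G(ru)$ and $\phi$. Combined with the previous paragraph, $\phi$ would be null-homotopic, a contradiction. Hence $F(a)=0$ for some $a$, and indeed some $a$ with $\|a\|<r$.

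The only potential obstacle is the uniform closeness estimate. It is routine here because $\phi$ is uniformly continuous on the compact set $S(\mathscr X)$ and $\mathscr X$ is finite-dimensional.
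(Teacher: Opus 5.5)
Your proof is correct and follows essentially the same route as the paper. In both, one assumes the averaged map never vanishes, normalizes it on a large sphere, notes it is null-homotopic because it extends over the ball, and then uses the normalization estimate plus uniform continuity of $\phi$ to connect it to $\phi$ by a normalized straight-line homotopy. The only differences are cosmetic: you use a sphere of radius $r$ where the paper writes $A(Rv)$ with $v\in S(\mathscr X)$, your tolerance is $1/2$ rather than $<1$, and you add the observation that the zero lies in the open ball.
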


\begin{proof}
	The map $\widetilde{\phi}$ is continuous, including at the origin,
	since
	\[
	\|\widetilde{\phi}(x)\|=\min\{\|x\|,1\}.
	\]
	Put
	\[
	A(y):=\frac1k\sum_{i=1}^k\widetilde{\phi}(y+x_i),
	\qquad
	M:=\max_{1\leq i\leq k}\|x_i\|.
	\]
	Suppose, towards a contradiction, that $A(y)\neq0$ for every
	$y\in\mathscr{X}$.
	
	We use the elementary normalization estimate
	\begin{equation}\label{eq:normalization}
		\left\|
		\frac{x}{\|x\|}
		-\frac{y}{\|y\|}
		\right\|
		\leq
		\frac{2\|x-y\|}
		{\max\{\|x\|,\|y\|\}},
		\qquad x,y\in\mathscr{X}\setminus\{0\}.
	\end{equation}

	If $R>M+1$ and $v\in S(\mathscr{X})$, then
	$\|Rv+x_i\|>1$ for all $i$, so
	\[
	\widetilde{\phi}(Rv+x_i)
	=\phi\!\left(\frac{Rv+x_i}{\|Rv+x_i\|}\right).
	\]
	Moreover, \eqref{eq:normalization} gives
	\[
	\left\|
	\frac{Rv+x_i}{\|Rv+x_i\|}-v
	\right\|
	\leq\frac{2M}{R}
	\]
	for all $v\in S(\mathscr{X})$.
	Since $S(\mathscr{X})$ is compact and $\phi$ is continuous,
	it follows that
	\[
	\sup_{v\in S(\mathscr{X})}
	\|A(Rv)-\phi(v)\| \longrightarrow0
	\qquad\text{as }R\longrightarrow\infty.
	\]
	Choose $R>M+1$ so large that this supremum is less than $1$.
	The formula
	\[
	H(v,t):=
	\frac{(1-t)\phi(v)+tA(Rv)}
	{\|(1-t)\phi(v)+tA(Rv)\|},
	\qquad (v,t)\in S(\mathscr{X})\times[0,1],
	\]
	defines a continuous homotopy from $\phi$ to
	\[
	v\longmapsto \frac{A(Rv)}{\|A(Rv)\|}.
	\]
	The denominator in the homotopy is nonzero because its argument
	has distance less than $1$ from the unit vector $\phi(v)$.
	On the other hand, the latter map extends continuously to
	$B_{\mathscr{X}}$ by
	\[
	v\longmapsto\frac{A(Rv)}{\|A(Rv)\|},
	\qquad v\in B_{\mathscr{X}}.
	\]
Contracting $B_{\mathscr X}$ to the origin shows that the
restriction of this extension to $S(\mathscr X)$ is
null-homotopic.
Since this restriction is homotopic to $\phi$ via $H$,
it follows that $\phi$ is null-homotopic, a contradiction.
\end{proof}

We are now ready to prove our main theorem. 

\begin{thm}\label{prop:lp-direct-sum-no-property-H}
	Suppose that  $1\leq  s<\infty$.	
	Let  $\{p_j\}_{j\geq1}$  be a sequence in $[1,\infty)$ such that
	$\sup_j p_j=+\infty$.
	For each $j\geq1$, let $(\Omega_j,\Sigma_j,\mu_j)$ be a
	measure space such that $L^{p_j}(\Omega_j,\mu_j)$ is
	infinite-dimensional.
	Then the real Banach space
	\[
	\mathscr{X}:=
	\left(
	\bigoplus_{j=1}^{\infty}L^{p_j}(\Omega_j,\mu_j)
	\right)_{\ell^s}
	\]
	does not have Property $(H)$.
\end{thm}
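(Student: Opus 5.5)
The plan is to argue by contradiction. Suppose $\mathscr X$ has Property $(H)$ with data $\{\mathscr V_n\}$, $\{\mathscr W_n\}$, $\mathscr H$ and a uniformly continuous $\phi$ with modulus $\omega$. I will place a large expander, scaled so that its edges are short, inside $\mathscr X$. The centering Lemma~\ref{lem:centering} then lets me assume the $\widetilde\phi$-images have mean zero. Lemma~\ref{lem:hilbert-poincare} bounds the resulting variance by the edge energy, which uniform continuity makes small, while the geometry of the scaled expander forces the variance to be of order one.

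\textbf{Step 1: uniform copies of $\ell^\infty_m$.} Fix a graph $\mathcal G_q$ from Lemma~\ref{lem:hilbert-poincare} and write $k=k_q$. Its shortest-path metric embeds isometrically into $\ell^\infty_k$ by the Fréchet embedding $F(v)=(\rho(v,w))_{w}$. Since $L^{p_j}(\Omega_j,\mu_j)$ is infinite-dimensional, it contains $k$ disjointly supported normalized functions. These span an isometric copy of $\ell^{p_j}_k$, which sits isometrically in $\mathscr X$ as the $j$-th summand. Because $\sup_j p_j=\infty$, I can choose $j$ with $p_j\geq\log k$. Then the identity $\ell^\infty_k\to\ell^{p_j}_k$ satisfies $\|y\|_\infty\le\|y\|_{p_j}\le e\|y\|_\infty$. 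This gives a linear map $T_k:\ell^\infty_k\to\mathscr X$ with distortion at most $e$, uniformly in $k$. I will set $x_v:=tT_k F(v)$ for a scale $t>0$ chosen below. Then adjacent vertices satisfy $\|x_v-x_w\|\le et$, and every pair satisfies $\|x_v-x_w\|\ge t\rho(v,w)$.

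\textbf{Step 2: passing to $\mathscr V_n$ and centering.} Since $\bigcup_n\mathscr V_n$ is dense and the $x_v$ are finitely many, I choose $n$ and points $x'_v\in\mathscr V_n$ with $\|x'_v-x_v\|\le\eta$, where $\eta$ is tiny. We have $\dim\mathscr W_n=\dim\mathscr V_n$ because $\phi_n$ is a homeomorphism of spheres. Identify $\mathscr W_n$ with $\ell^2_{\dim}$. A homeomorphism between spheres of equal dimension has degree $\pm1$, so it is not null-homotopic. Lemma~\ref{lem:centering} then gives $a\in\mathscr V_n$ with $\sum_v\widetilde\phi(a+x'_v)=0$. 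Next I check that $\widetilde\phi$, defined on all of $\mathscr X$ by the same formula, is uniformly continuous with a modulus $\omega'$ depending only on $\omega$. The estimate is
\[
\|\widetilde\phi(x)-\widetilde\phi(y)\|\le\|x-y\|+\min\{\|x\|,1\}\,\omega\bigl(2\|x-y\|/\max\{\|x\|,\|y\|\}\bigr),
\]
obtained from \eqref{eq:normalization}. If $\max\{\|x\|,\|y\|\}\le\sqrt{\|x-y\|}$, the second term is at most $2\sqrt{\|x-y\|}$. Otherwise its argument is at most $2\sqrt{\|x-y\|}$. Either way the bound tends to $0$ with $\|x-y\|$, independently of $n$.

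\textbf{Step 3: Poincaré contradiction.} Apply \eqref{eq:poincare} to $\xi_v=\widetilde\phi(a+x'_v)$, whose mean is $\bar\xi=0$. The right-hand side is at most $\omega'(et+2\eta)^2$. The left-hand side equals $\frac{2\delta}{k}\sum_v\min\{\|a+x'_v\|,1\}^2$. The main point is a lower bound for it. Let $B$ be the set of vertices with $\|a+x'_v\|<1/2$. Any two vertices of $B$ satisfy $t\rho(v,w)\le 1+2\eta$, so $B$ lies in a graph ball of radius $r\le 2/t$. Such a ball has at most $d^{r+1}$ vertices. Choosing $k$ so large that $d^{2/t+1}\le k/2$ ensures that at least half the vertices satisfy $\|a+x'_v\|\geq 1/2$, so the left-hand side is at least $\delta/4$. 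The order of choices is as follows. First fix $t$ and $\eta$ with $\omega'(et+2\eta)^2<\delta/4$. Then pick $k$ large, which is possible because $k_q\to\infty$. Then pick $j$ with $p_j\ge\log k$, and finally pick $n$. This yields $\delta/4\le\omega'(et+2\eta)^2<\delta/4$, a contradiction. I expect the delicate points to be Step 1, namely the uniform-distortion copies of $\ell^\infty_k$ that use $\sup_jp_j=\infty$, and the $n$-independent modulus for $\widetilde\phi$ in Step 2.
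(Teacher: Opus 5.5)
Your proposal is correct and follows essentially the same route as the paper: bounded-distortion copies of $\ell^\infty_k$ inside a summand with $p_j\geq\log k$, a scaled Fr\'echet embedding of the expander, centering via Lemma~\ref{lem:centering}, the Poincar\'e inequality with zero mean, and a set of small-norm vertices trapped in a graph ball whose size does not depend on $k$. The differences are only technical: you approximate the finitely many points $x_v$ individually, whereas the paper perturbs the linear map into $\mathscr V_n$ (its $\tau_k$, with distortion $5$); and you control every edge at once through a global modulus of continuity for $\widetilde\phi$, whereas the paper splits off the edges meeting the bad set and bounds their total contribution by counting.
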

\begin{proof}
	Suppose that $\mathscr X$ has Property~$(H)$, and let
	$\phi:S(\mathscr X)\to S(\mathscr H)$ be a corresponding
	uniformly continuous map, where $\mathscr H$ is a real
	Hilbert space.
	By definition, there exists an increasing sequence
	$\{\mathscr V_n\}_{n\geq1}$ of finite-dimensional subspaces
	of $\mathscr X$ such that
	$\bigcup_{n\geq1}\mathscr V_n$ is dense in $\mathscr X$
	and, for every $n$, the restriction
	\[
	\phi_n:=\phi\big|_{S(\mathscr V_n)}:
	S(\mathscr V_n)\longrightarrow S(\ell^2_{N_n}),
	\qquad N_n:=\dim\mathscr V_n,
	\]
	is a homeomorphism.
	Here we use compatible isometric identifications of the
	nested finite-dimensional Hilbert spaces with the coordinate
	subspaces $\ell^2_{N_n}$ of $\ell^2(\mathbb N)$.
	Discarding finitely many initial terms, we may assume that
	$N_n\geq2$ for every $n$.
	
	For $t\geq0$, define
	\[
	\omega_\phi(t):=
	\sup\left\{
	\|\phi(x)-\phi(y)\|:
	x,y\in S(\mathscr X),\
	\|x-y\|_{\mathscr X}\leq t
	\right\}
	\]
	and
	\[
	\omega_{\phi_n}(t):=
	\sup\left\{
	\|\phi_n(x)-\phi_n(y)\|:
	x,y\in S(\mathscr V_n),\
	\|x-y\|_{\mathscr X}\leq t
	\right\}.
	\]
	These functions are nondecreasing, and
	\[
	0\leq\omega_{\phi_n}(t)\leq\omega_\phi(t)\leq2
	\qquad(n\geq1,\ t\geq0).
	\]
	Since $\phi$ is uniformly continuous,
	\[
	\lim_{t\rightarrow0}\omega_\phi(t)=0.
	\]

	\medskip
	\noindent\emph{Uniform copies of finite-dimensional $\ell^\infty$
		inside the paving spaces.}
	Fix an integer $k\geq2$, and choose $m$ such that
	$p_m\geq\log_2 k$.
	Since $L^{p_m}(\Omega_m,\mu_m)$ is infinite-dimensional,
	we can choose functions $f_1,\ldots,f_k\in L^{p_m}(\Omega_m,\mu_m)$
	with pairwise disjoint supports and
	$\|f_i\|_{L^{p_m}(\Omega_m,\mu_m)}=1$ for $i=1,\ldots,k$.
	Define a linear map
	\[
	T_k:\ell^\infty_k\longrightarrow\mathscr{X}
	\]
	as follows. 
	Write an element of $\mathscr{X}$ as
	\[
	x=(x^{(j)})_{j\geq1},
	\qquad
	x^{(j)}\in L^{p_j}(\Omega_j,\mu_j).
	\]
	For $y=(c_1,\ldots,c_k)\in\ell^\infty_k$, define
	\[
	T_k(y):=(x^{(j)})_{j\geq1},
	\]
	where
	\[
	x^{(m)}:=\sum_{i=1}^k c_i f_i
	\in L^{p_m}(\Omega_m,\mu_m),
	\]
	and  all the remaining components are zero.
	
	Since the functions $f_i$ have pairwise disjoint supports
	and unit norm, for every $y\in\ell^\infty_k$ we have
	\[
	\|y\|_\infty
	\leq
	\|T_k(y)\|_{\mathscr{X}}
	=
	\left(\sum_{i=1}^k|c_i|^{p_m}\right)^{1/p_m}
	\leq
	k^{1/p_m}\|y\|_\infty
	\leq
	2\|y\|_\infty.
	\]

	By density and the fact that the spaces $\mathscr{V}_n$ are increasing, we can
	choose $n$ and $x_1,\ldots,x_k\in\mathscr{V}_n$ such that
	\[
	\sum_{i=1}^k\|x_i-T_ke_i\|_{\mathscr{X}}<\frac12,
	\]
	where $e_1,\ldots,e_k$ are the standard coordinate vectors of  $\ell^\infty_k$.
	Define
	\[
	\tau_k:\ell^\infty_k\longrightarrow\mathscr{V}_n,
	\qquad
	\tau_k(y):=2\sum_{i=1}^kc_ix_i,
	\]
	where $y=(c_1,\ldots,c_k)\in\ell^\infty_k$.
	Since
	\[
	\begin{aligned}
		\left\|\sum_{i=1}^k c_ix_i-T_k(y)\right\|_{\mathscr{X}}
		&\leq
		\sum_{i=1}^k |c_i| \cdot \|x_i-T_k(e_i)\|_{\mathscr{X}}\\
		&\leq
		\|y\|_\infty\sum_{i=1}^k\|x_i-T_k(e_i)\|_{\mathscr{X}}\\
		&\leq\frac12\|y\|_\infty,
	\end{aligned}
	\]
	we obtain
	\begin{equation}\label{eq:uniform-cubes}
		\|y\|_\infty\leq\|\tau_k(y)\|_{\mathscr{X}}\leq5\|y\|_\infty,
		\qquad y\in\ell^\infty_k.
	\end{equation}
	In particular, $\tau_k$ is injective and $\dim\mathscr{V}_n\geq k$.

	Fix an integer $d\geq3$, and set
	\[
	\delta:=1-\frac{2\sqrt{d-1}}d>0.
	\]
	Choose a sequence $\{\mathcal G_q\}_{q\in\mathbb N}$
	as in Lemma~\ref{lem:hilbert-poincare}.
	For each $q\in\mathbb N$, let $\mathcal V_q$ be its vertex set,
	let $P_q$ be its normalized adjacency matrix, and put
	$k_q:=|\mathcal V_q|$.
	Then $k_q\to\infty$ as $q\to\infty$, and
	\eqref{eq:poincare} holds for every family of vectors in
	any real Hilbert space.

	\medskip
	\noindent\emph{Embedding the graphs and centering the sphere maps.}
	For each $q\in\mathbb{N}$, 
	use \eqref{eq:uniform-cubes} with $k=k_q$ to obtain
	\[
	\tau_{k_q}:\ell^\infty_{k_q}
	\longrightarrow\mathscr{V}_{n_q},
	\qquad
	\|y\|_\infty
	\leq\|\tau_{k_q}y\|_{\mathscr{X}}
	\leq5\|y\|_\infty.
	\]
	Fix an ordering of $\mathcal V_q$ and use it to identify
	$\ell^\infty(\mathcal V_q)$ with $\ell^\infty_{k_q}$.
	Let $d_q$ denote the shortest-path metric on $\mathcal{G}_q$,
	and define
	\[
	\psi_q:\mathcal V_q\longrightarrow\ell^\infty_{k_q},
	\qquad
	\psi_q(v):=\bigl(d_q(v,w)\bigr)_{w\in\mathcal V_q}.
	\]
	The triangle inequality, together with the coordinate indexed by $v$,
	gives
	\[
	\|\psi_q(v)-\psi_q(w)\|_\infty=d_q(v,w).
	\]
	Fix an integer $r\geq1$, and put
	\[
	x_v:=\frac1r\tau_{k_q}\bigl(\psi_q(v)\bigr),
	\qquad v\in\mathcal V_q.
	\]
	Then
	\begin{equation}\label{eq:graph-distances}
		\frac1r\,d_q(v,w)
		\leq\|x_v-x_w\|_{\mathscr{X}}
		\leq\frac5r\,d_q(v,w).
	\end{equation}
	
	Let $\widetilde\phi_{n_q}$ be the extension of $\phi_{n_q}$
	defined in Lemma~\ref{lem:centering}.
	Since
	\[
	N_{n_q}=\dim\mathscr V_{n_q}\geq k_q\geq2
	\]
	and $\phi_{n_q}$ is a homeomorphism between the corresponding
	unit spheres, $\phi_{n_q}$ is not null-homotopic.
	Therefore Lemma~\ref{lem:centering} applies and gives
	$a=a_{q,r}\in\mathscr V_{n_q}$ such that the vectors
	\[
	\xi_v:=\widetilde\phi_{n_q}(a+x_v)
	\in\ell^2_{N_{n_q}}
	\]
	satisfy
	\begin{equation}\label{eq:zero-mean}
		\frac1{k_q}\sum_{v\in\mathcal V_q}\xi_v=0.
	\end{equation}
	
	\medskip
	\noindent\emph{Estimating the variance and the edge energy.}
	Define
	\[
	\mathcal B_{q,r}
	:=\{v\in\mathcal V_q:\|a+x_v\|_{\mathscr{X}}<1\}.
	\]
	If this set is nonempty, fix $v_0\in\mathcal B_{q,r}$.
	For every $v\in\mathcal B_{q,r}$,
	\eqref{eq:graph-distances} yields
	\[
	\frac1r\,d_q(v,v_0)
	\leq\|x_v-x_{v_0}\|_{\mathscr{X}}
	\leq
	\|a+x_v\|_{\mathscr{X}}
	+\|a+x_{v_0}\|_{\mathscr{X}}
	<2.
	\]
	Consequently, $\mathcal B_{q,r}$ is contained in a graph ball of
	radius $2r$. Since every vertex has degree $d$,
	\begin{equation}\label{eq:bad-vertices}
		|\mathcal B_{q,r}|
		\leq C_{d,r}:=\sum_{j=0}^{2r}d^j.
	\end{equation}
	This also holds if $\mathcal B_{q,r}$ is empty.
	The constant $C_{d,r}$ is independent of $q$.

	By the definition of $\widetilde\phi_{n_q}$, we have
	$\|\xi_v\|_2\leq1$ for all $v$ and
	$\|\xi_v\|_2=1$ for $v\notin\mathcal B_{q,r}$. Hence
	\begin{equation}\label{eq:variance-lower}
		\frac1{k_q}\sum_{v\in\mathcal V_q}\|\xi_v\|_2^2
		\geq1-\frac{C_{d,r}}{k_q}.
	\end{equation}
	
	Suppose that $v,w$ are adjacent and neither belongs to
	$\mathcal B_{q,r}$.
	Then
	$\|a+x_v\|_{\mathscr{X}},
	\|a+x_w\|_{\mathscr{X}}\geq1$
	and $\|x_v-x_w\|_{\mathscr{X}}\leq5/r$.
	Using \eqref{eq:normalization} and \eqref{eq:graph-distances}, we get
	\begin{align*}
		\|\xi_v-\xi_w\|_2
		&=
		\left\|
		\phi_{n_q}\!\left(
		\frac{a+x_v}{\|a+x_v\|_{\mathscr{X}}}
		\right)
		-
		\phi_{n_q}\!\left(
		\frac{a+x_w}{\|a+x_w\|_{\mathscr{X}}}
		\right)
		\right\|_2\\
		&\leq\omega_{\phi_{n_q}}(10/r).
	\end{align*}
	For the remaining adjacent pairs, the bound
	$\|\xi_v-\xi_w\|_2^2\leq4$ suffices.
	Since $P_q$ is symmetric and stochastic,
	\begin{align*}
		\frac1{k_q}
		\sum_{\substack{v,w\in\mathcal V_q\\
				v\in\mathcal B_{q,r}\ \text{or}\
				w\in\mathcal B_{q,r}}}
		(P_q)_{vw}
		&\leq
		\frac1{k_q}
		\sum_{v\in\mathcal B_{q,r}}\sum_w(P_q)_{vw}
		+
		\frac1{k_q}
		\sum_{w\in\mathcal B_{q,r}}\sum_v(P_q)_{vw}\\
		&=\frac{2|\mathcal B_{q,r}|}{k_q}.
	\end{align*}
	It follows that
	\begin{equation}\label{eq:edge-upper}
		\frac1{k_q}\sum_{v,w\in\mathcal V_q}
		(P_q)_{vw}\|\xi_v-\xi_w\|_2^2
		\leq
		\omega_{\phi_{n_q}}(10/r)^2
		+\frac{8C_{d,r}}{k_q}.
	\end{equation}
	
	Applying Lemma~\ref{lem:hilbert-poincare} to the family
	$\{\xi_v\}_{v\in\mathcal V_q}$ in $\ell^2_{N_{n_q}}$,
	and using \eqref{eq:zero-mean}, \eqref{eq:variance-lower},
	and \eqref{eq:edge-upper}, we obtain
	\[
	2\delta\left(1-\frac{C_{d,r}}{k_q}\right)
	\leq
	\omega_{\phi_{n_q}}(10/r)^2
	+\frac{8C_{d,r}}{k_q},
	\]
	or equivalently,
	\[
	\omega_{\phi_{n_q}}(10/r)^2
	\geq
	2\delta-(2\delta+8)\frac{C_{d,r}}{k_q}.
	\]
	Consequently,
	\[
	\omega_{\phi}(10/r)^2
	\geq
	2\delta-(2\delta+8)\frac{C_{d,r}}{k_q}.
	\]
	For each fixed $r$, let $q\to\infty$.
	Since $k_q\to\infty$ and $C_{d,r}$ is independent of $q$,
	\[
	\omega_{\phi}(10/r)^2\geq2\delta.
	\]
	Now let $r\to\infty$.
	Since $\phi$ is uniformly continuous,
	$\omega_\phi(10/r)\to0$.
	This contradicts the inequality
	$\omega_\phi(10/r)^2\geq2\delta$, since $\delta>0$.
	Therefore $\mathscr X$ does not have Property~$(H)$.
\end{proof}

\begin{rmk}
	The same argument applies to direct sums of Schatten $p$-classes.
	Let $1\leq s<\infty$, and let
	$\{p_j\}_{j\geq1}\subset[1,\infty)$ satisfy
	$\sup_j p_j=\infty$.
	For each $j\geq1$, let $\mathscr H_j$ be an
	infinite-dimensional real or complex Hilbert space.
	Denote by $S^p(\mathscr H_j)$ the Schatten $p$-class of
	compact operators on $\mathscr H_j$, equipped with the norm
	\[
	\|T\|_{S^p}
	:=
	\left(\operatorname{Tr}\bigl((T^*T)^{p/2}\bigr)\right)^{1/p}.
	\]
	Then
	\[
	\mathscr X:=
	\left(
	\bigoplus_{j=1}^{\infty}S^{p_j}(\mathscr H_j)
	\right)_{\ell^s},
	\]
	regarded as a real Banach space, does not have Property~$(H)$.
	
	Indeed, fix $k\geq2$ and choose $m$ such that
	$p_m\geq\log_2 k$.
	Choose rank-one orthogonal projections
	$E_1,\ldots,E_k$ on $\mathscr H_m$ with pairwise
	orthogonal ranges.
	For $y=(c_1,\ldots,c_k)\in\ell^\infty_k$, we have
	\[
	\left\|\sum_{i=1}^k c_iE_i\right\|_{S^{p_m}}
	=
	\left(\sum_{i=1}^k|c_i|^{p_m}\right)^{1/p_m}.
	\]
	Placing this operator in the $m$-th summand and taking
	all the remaining components to be zero defines a linear map
	$T_k:\ell^\infty_k\to\mathscr X$ satisfying
	\[
	\|y\|_\infty
	\leq\|T_k(y)\|_{\mathscr X}
	\leq k^{1/p_m}\|y\|_\infty
	\leq2\|y\|_\infty.
	\]
	The remainder of the proof of
	Theorem~\ref{prop:lp-direct-sum-no-property-H}
	then applies without change.
\end{rmk}

\end{document}